\newcommand{\Kh}{\mathit{Kh}}
\newcommand{\co}{\colon}
\newcommand{\ra}{\rightarrow}
\newcommand{\wt}{\widetilde}
\newcommand{\al}{\alpha}
\newcommand{\ETspace}{\mathbf{X}_\bullet}
\newcommand{\ETsp}[1]{X_{#1}}
\newcommand{\Khsp}[1]{\mathcal{X}^{(#1)}_{\mathit{Kh}}}
\newcommand{\ETQ}{\mathbf{P}}
\newcommand{\C}{\mathsf{C}}
\newcommand{\AbGrp}{\mathsf{Ab}}
\newcommand{\AbTop}{\mathsf{AbTop}}
\newcommand{\Top}{\mathsf{Top}_*}
\newcommand{\vect}{\overline}
\renewcommand{\th}{^{\text{th}}}
\newcommand{\KhSpace}{\mathcal{X}_\mathit{Kh}}
\newcommand{\invlim}{\varprojlim}
\DeclareMathOperator{\Sym}{Sym}
\DeclareMathOperator{\Hom}{Hom}
\DeclareMathOperator{\Id}{Id}
\DeclareMathOperator{\holim}{holim}
\title{Khovanov homotopy types and the Dold-Thom functor}
\author{Brent Everitt, Robert Lipshitz, Sucharit Sarkar and Paul
  Turner\thanks{RL was supported 
by an NSF grant number DMS-0905796 and a Sloan Research Fellowship. SS
was supported 
by a Clay Foundation Postdoctoral Fellowship. PT was partially supported by Swiss National Science Foundation
grant 200021-131967.}}
\institute{
{\sc Brent Everitt:} Department of Mathematics, University of York, York
YO10 5DD, United Kingdom. \email{brent.everitt@york.ac.uk}. 
{\sc Robert Lipshitz, Sucharit Sarkar:} Department of Mathematics,
Columbia University, New York
NY 10027, USA. \email{lipshitz@math.columbia.edu},
\email{sucharit@math.columbia.edu}.
% {\sc Brent Everitt:} Department of Mathematics, University of York, York
% YO10 5DD, United Kingdom. \email{brent.everitt@york.ac.uk}.
\hspace{1em}{\sc Paul Turner:} Section de math\'ematiques,  
Universit\'e de Gen\`eve, 2-4 rue du Li\`evre, CH-1211, Geneva and D\'epartement de math\'ematiques, 
Universit\'e de Fribourg, Chemin du mus\'ee, CH-1700 Fribourg, Switzerland.
\email{prt.maths@gmail.com}.
}
\titlerunning{Khovanov homotopy types and the Dold-Thom functor}
\authorrunning{Everitt, Lipshitz, Sarkar and Turner}
\begin{document}

\maketitle

%%%%%%%%%%%%%%%%%%%%%%%%%%%%%%%%%%%%%%%%%%%%%%%%%%%%%%%%%%%%%%%%%%%%%%%%%

%\input{configuration}

%\title{}

%\author{Brent Everitt}

%\author{Robert Lipshitz}
%\thanks{RL was supported by an NSF grant number DMS-0905796 and a Sloan Research Fellowship.}
%\email{\href{mailto:lipshitz@math.columbia.edu}{lipshitz@math.columbia.edu}}
%\address{Department of Mathematics, Columbia University, New York, NY 10027}

%\author{Sucharit Sarkar}
%\thanks{SS was supported by a Clay Foundation Postdoctoral Fellowship}
%\email{\href{mailto:sucharit@math.columbia.edu}{sucharit@math.columbia.edu}}
%\address{Department of Mathematics, Columbia University, New York, NY 10027}

%\author{Paul Turner}

% %\subjclass[2010]{\href{http://www.ams.org/mathscinet/search/mscdoc.html?code=57M25,55P42}{57M25,
% %    55P42}}

% \keywords{}

%\date{\today}

\begin{abstract}
We show that the spectrum constructed by Everitt and Turner as a possible Khovanov homotopy type is a product of Eilenberg-MacLane spaces
 and is thus determined by Khovanov homology. By using the Dold-Thom
 functor it can therefore be obtained from the Khovanov homotopy type constructed by Lipshitz and Sarkar.
\end{abstract}

\maketitle

A \emph{Khovanov homotopy type} is a way of associating a (stable)
space to each link $L$ so that the classical invariants
of the space yield the Khovanov homology of $L$. There are two recent
constructions of Khovanov homotopy types, using different techniques
and giving different results \cite{ET-kh-spectrum, RS-khovanov}.
In \cite{ET-kh-spectrum} homotopy
limits were employed to build an $\Omega$-spectrum $\ETspace L
=\{\ETsp{k}(L)\}$ with the following properties:
\begin{description}
\item[(i).]  the  homotopy type is a link invariant, and
\item[(ii).]  the homotopy groups are Khovanov homology: $$\pi_i(\ETspace(L))=\Kh^{-i}(L).$$
\end{description}

The main goal of this note is to prove the following result.

\begin{theorem}\label{thm:product}
  Each of the spaces $\ETsp{k}(L)$ is homotopy equivalent to a product of
  Eilenberg-MacLane spaces.
\end{theorem}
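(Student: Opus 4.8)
The plan is to unwind the construction of $\ETsp{k}(L)$ in \cite{ET-kh-spectrum} just far enough to see that it has the homotopy type of a topological abelian group, and then to invoke the classical fact that a topological abelian group with the homotopy type of a CW complex is homotopy equivalent to a product of Eilenberg--MacLane spaces. I will also use that, since $\Kh(L)$ is finitely generated and concentrated in finitely many homological degrees, the product in question is finite, so that no point-set care about infinite products of realizations is needed.

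\emph{Step 1: locate the construction inside $\AbTop$.} Recall that $\ETsp{k}(L)$ is built as a homotopy limit $\holim_{\mathcal{C}} F_k$ of a finite diagram $F_k \co \mathcal{C} \to \Top$ --- a totalization, or an iterated homotopy fibre --- whose vertices are Eilenberg--MacLane spaces $K(A,k)$ with $A$ ranging over the (free abelian) Khovanov cochain groups, and whose structure maps are the Eilenberg--MacLane realizations of the Khovanov differentials. The key point is that $F_k$ factors through the category $\AbTop$ of topological abelian groups and continuous homomorphisms: every vertex is a topological abelian group, and every structure map, being induced by a homomorphism of abelian groups, is itself a homomorphism.

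\emph{Step 2: the homotopy limit stays in $\AbTop$.} Form the Bousfield--Kan model of $\holim_{\mathcal{C}} F_k$ inside $\AbTop$: it is assembled from products of the $F_k(c)$ and from cotensors $\operatorname{Map}(\Delta^n, -)$, and both operations carry topological abelian groups to topological abelian groups under the pointwise operations (working, as usual, with compactly generated spaces so that mapping spaces behave well). The forgetful functor $\AbTop \to \Top$ preserves all limits and these cotensors, so applying it returns the usual homotopy limit in $\Top$. Hence $\ETsp{k}(L)$ has the homotopy type of the underlying space of a topological abelian group $G_k$ with $\pi_i(G_k) = \pi_i(\ETsp{k}(L)) = \Kh^{k-i}(L)$; and, being a finite homotopy limit of CW Eilenberg--MacLane spaces, $G_k$ has the homotopy type of a CW complex.

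\emph{Step 3: split the topological abelian group, and the main obstacle.} Now apply the theorem that a topological abelian group $G$ of the homotopy type of a CW complex is homotopy equivalent to $\prod_{n \ge 0} K(\pi_n(G), n)$. (Sketch: $\operatorname{Sing} G$ is a simplicial abelian group whose normalized chain complex $N_*$ is a bounded-below complex of free abelian groups; the splittings $N_n/B_n \cong H_n(N_*) \oplus B_{n-1}$, which exist because $N_n/Z_n \cong B_{n-1}$ is a subgroup of the free group $N_{n-1}$ and hence free, assemble into a quasi-isomorphism $N_* \to H_*(N_*)$; via Dold--Kan this is a weak equivalence of simplicial abelian groups $\operatorname{Sing} G \to \prod_n K(\pi_n G, n)$, and realizing and using $|\operatorname{Sing} G| \simeq G$ gives the claim.) Applied to $G_k$ this yields $\ETsp{k}(L) \simeq G_k \simeq \prod_i K(\Kh^{k-i}(L), i)$, a finite product. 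The substantive work lies in Steps 1 and 2: one must reconcile the actual model of $\ETsp{k}(L)$ in \cite{ET-kh-spectrum} with a model that is visibly an object of $\AbTop$ --- checking in particular that no genuinely non-linear data (extraneous basepoint choices, non-homomorphic gluing maps) slips in --- and one must keep the point-set topology honest (compactly generated spaces, and that the finite homotopy limit retains CW homotopy type). Step 3 is standard.
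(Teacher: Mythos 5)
Your proposal follows essentially the same route as the paper: realize the defining diagram in $\AbTop$, observe that the Bousfield--Kan homotopy limit of a diagram of topological abelian groups and continuous homomorphisms is again a topological abelian group (the paper's Proposition~\ref{prop:holim}, proved by exactly the inspection of the product-and-equalizer formula you describe), and conclude by the splitting theorem for path-connected commutative topological monoids (Proposition~\ref{prop:TAG-is-K-pi-n}). The one ingredient you explicitly defer as ``the substantive work'' in Step~1 --- a strictly functorial Eilenberg--MacLane construction $\AbGrp\to\AbTop$, so that the structure maps are genuine continuous homomorphisms rather than maps defined only up to homotopy --- is precisely what the paper imports from McCord~\cite{McC-top-classsp}: the bifunctor $B(-,-)$ with $B(A,S^{n})\simeq K(A,n)$, together with the homotopy invariance of $\holim$ to compare this model with the one actually used in \cite{ET-kh-spectrum}. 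With that citation your Step~1 closes, so the gap is one of attribution rather than of substance. One caution on your Step~3 sketch: the normalized chains of $\operatorname{Sing}G$ are \emph{not} levelwise free (already $\operatorname{Map}(\Delta^{0},G)=G$ need not be free abelian), so the boundaries $B_{n-1}$ are not free for the reason you give; the splitting nevertheless holds because $\mathbb{Z}$ is hereditary (every bounded-below complex of abelian groups is quasi-isomorphic to its homology), or one can simply cite \cite[Corollary 4K.7]{Hatcher-top-book} as the paper does. Your extra attention to CW homotopy type and compact generation is reasonable and is, if anything, glossed over in the paper's statement of Proposition~\ref{prop:TAG-is-K-pi-n}.
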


In~\cite{RS-khovanov} the programme of Cohen, Jones
and Segal~\cite{Cohen_Jones_Segal95} was generalized to produce a suspension spectrum
$\KhSpace(L)$ with the following properties:
\begin{description}
\item[(i).] the  homotopy type is a link invariant, and
\item[(ii).]  the reduced cohomology is Khovanov homology: $$\wt{H}^i(\KhSpace(L))=\Kh^i(L).$$
\end{description}
\noindent As a corollary we obtain that $\ETspace(L)$ is homotopy
equivalent to the infinite symmetric product of $\KhSpace(L)$.

To prove Theorem~\ref{thm:product} we use the explicit model, due
to McCord~\cite{McC-top-classsp}, of the
Eilenberg-MacLane spaces.
Given a monoid $G$ and a based topological
space $X$, let $B(G,X)$ denote the set of maps $u\co X\to G$ such that
$u(x)=0$ for all but finitely many $x\in X$. Then $B(G,X)$ is a
monoid, and if $G$ is a group (the case of interest) then $B(G,X)$ is
a group. Moreover, when $G$ is an abelian topological group 
the set $B(G,X)$ can be topologized in a natural way so that
the group operation is continuous.
% Given an abelian monoid $G$ and a based topological
% space $X$, let $B(G,X)$ denote the set of maps $u\co X\to G$ such that
% $u(x)=0$ for all but finitely many $x\in X$. Then $B(G,X)$ is an
% abelian monoid, and if $G$ is a group (the case of interest) then $B(G,X)$ is
% a group. The set $B(G,X)$ can be topologized in a natural way so that
% the group operation is continuous.
This construction has
nice functoriality: 
letting $\AbGrp, \Top$ and $\AbTop$ denote respectively the categories
of 
abelian groups, based topological spaces and topological abelian groups, one has the following result.

\begin{proposition}\label{prop:bifunctor}
  \cite[Proposition 6.7]{McC-top-classsp} McCord's construction is a bifunctor
$$
B(-,-)\co \AbGrp \times \Top \rightarrow \AbTop .
$$
\end{proposition}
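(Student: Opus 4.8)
The plan is to exhibit the action of $B(-,-)$ on morphisms and then check the functor axioms; the only nonformal point is continuity. Given a homomorphism $\phi\co G\to G'$ in $\AbGrp$ (which we regard as a discrete topological abelian group, so that $B(G,X)$ is topologized) and a based map $f\co X\to X'$ in $\Top$, I would define $B(\phi,f)\co B(G,X)\to B(G',X')$ by pushing supports forward and applying $\phi$ to coefficients: for $u\co X\to G$ of finite support, set
\[
  B(\phi,f)(u)(x') = \sum_{x\in f^{-1}(x')}\phi\bigl(u(x)\bigr).
\]
Only finitely many summands are nonzero since $\operatorname{supp}(u)$ is finite, and $B(\phi,f)(u)$ is itself finitely supported because $\operatorname{supp}\bigl(B(\phi,f)(u)\bigr)\subseteq f(\operatorname{supp}(u))$; moreover $f$ being based and $\phi$ carrying $0$ to $0$ make this construction compatible with the basepoint data in McCord's filtration of $B(G,X)$, so $B(\phi,f)(u)\in B(G',X')$. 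In the ``formal sum'' notation, $B(\phi,f)$ sends $\sum_i g_i[x_i]$ to $\sum_i \phi(g_i)[f(x_i)]$.

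That $B(\phi,f)$ is a group homomorphism is immediate, since addition in $B(G,X)$ is pointwise addition of $G$-valued functions and $\phi$ is additive. For continuity I would use McCord's description of the topology: $B(G,X)$ is the colimit of a filtration $B_n(G,X)$, where $B_n(G,X)$ carries the quotient topology from the surjection $q_n\co (G\times X)^n\to B_n(G,X)$ sending an $n$-tuple to the corresponding formal sum. It then suffices to check that each composite $B(\phi,f)\circ q_n$ is continuous, and this composite is exactly $q'_n\circ(\phi\times f)^{n}\co (G\times X)^n\to (G'\times X')^n\to B_n(G',X')$, a composite of continuous maps ($\phi$ is continuous since $G$ is discrete, $f$ by hypothesis, and $q'_n$ by construction). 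By the universal property of the quotient and colimit topologies, $B(\phi,f)$ is continuous, hence a morphism of $\AbTop$.

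It remains to verify functoriality. Clearly $B(\Id_G,\Id_X)=\Id_{B(G,X)}$. Given further $\phi'\co G'\to G''$ and a based $f'\co X'\to X''$, both $B(\phi'\phi,f'f)$ and $B(\phi',f')\circ B(\phi,f)$ send a point mass $g[x]$ to $\phi'\bigl(\phi(g)\bigr)[f'(f(x))]$; the one thing to note is that iterated pushforward along $f$ and then $f'$ equals pushforward along the composite, because $(f'f)^{-1}(x'')$ is the disjoint union of the fibers $f^{-1}(x')$ over $x'\in (f')^{-1}(x'')$. As both sides are homomorphisms agreeing on point masses, they coincide. In particular $B(\phi,f)=B(\Id_{G'},f)\circ B(\phi,\Id_X)=B(\phi,\Id_{X'})\circ B(\Id_G,f)$, so $B$ is separately functorial in each variable with commuting actions, which is exactly what it means to be a bifunctor on $\AbGrp\times\Top$.

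The main obstacle is precisely the continuity step: one must recall the exact colimit-of-quotients topology McCord places on $B(G,X)$ — together with the already-cited fact that, for $G$ a topological abelian group, the group operation is continuous for it — and check that $B(\phi,f)$ is compatible with the structure surjections $q_n$. Everything else is a formal manipulation of finitely supported functions.
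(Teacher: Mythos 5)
The paper offers no proof of this proposition at all: it is stated purely as a citation of McCord's Proposition 6.7, so there is nothing internal to compare against. Your argument is a correct direct verification of what that reference supplies, and it follows the standard (essentially the only) route: define $B(\phi,f)$ as pushforward of finitely supported functions, check it is a homomorphism, verify continuity via the quotient--colimit description of McCord's topology, and confirm the functor identities on point masses. The one place worth tightening is the basepoint: in McCord's actual definition the value at the basepoint of $X'$ is forced to be $0$ (equivalently $g[*]=0$ in formal-sum notation), so terms of $u$ whose support is carried by $f$ into the basepoint must be discarded rather than summed; your formula as written could assign a nonzero value there. You gesture at this with ``compatible with the basepoint data,'' and it causes no real problem --- the quotient-map argument handles it automatically since $q'_n$ already imposes the relation --- but it should be said explicitly. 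With that caveat, the proof is sound and supplies exactly the content the paper delegates to the reference.
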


Furthermore, as  special case  of~\cite[Theorem
11.4]{McC-top-classsp}, for an abelian group $G$ the space $B(G,S^n)$ is the
Eilenberg-MacLane space $K(G,n)$. 
Thus we may take as \emph{the} Eilenberg-MacLane space functor:
$$
B(-,S^n)\co \AbGrp \rightarrow \AbTop .
$$

Conversely, the following is~\cite[Corollary 4K.7,
p.\ 483]{Hatcher-top-book} (apparently originally due to
Moore; cf.~\cite[p.\ 295]{McC-top-classsp}):
\begin{proposition}\label{prop:TAG-is-K-pi-n}
  A path-connected, commutative topological monoid is a product of
  Eilenberg-MacLane spaces.
\end{proposition}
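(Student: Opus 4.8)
The plan is to reduce the proposition to a purely homological formality statement by way of the Dold--Kan correspondence, after first replacing the monoid by a topological abelian group. Write $M$ for the given path-connected commutative topological monoid. First I would exploit path-connectedness: it forces $\pi_0(M)$ to be the trivial group, so $M$ is grouplike. For a grouplike monoid the group-completion theorem makes the natural map from $M$ to its group completion a weak homotopy equivalence, and since $M$ is commutative its group completion is modelled by a topological abelian group $A$ with $\pi_n(A)\cong\pi_n(M)$ for all $n$. It therefore suffices to prove the statement for a topological abelian group.

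Next I would linearize the problem. Applying the singular functor to $A$ gives a simplicial abelian group $A_\bullet=\mathrm{Sing}(A)$ whose realization is weakly equivalent to $A$ and which satisfies $\pi_n(A_\bullet)\cong\pi_n(A)$. Under the Dold--Kan correspondence $A_\bullet$ is identified with a non-negatively graded chain complex $(C_*,\partial)$ of abelian groups with $H_n(C_*)\cong\pi_n(A)$, and the Eilenberg--MacLane simplicial abelian group $K(\pi,n)$ corresponds precisely to the complex with $\pi$ concentrated in a single degree $n$. Because Dold--Kan and geometric realization both carry direct sums of simplicial abelian groups to (weak) products of spaces, it is enough to exhibit a chain of quasi-isomorphisms identifying $C_*$ with the complex $\bigoplus_n H_n(C_*)[n]$ carrying zero differential.

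That final identification is the heart of the matter, and it is here that the hypotheses pay off: passing to an \emph{abelian} group, rather than a mere homotopy-commutative $H$-space, has replaced the delicate $k$-invariant obstructions of a general space by the single question of whether a complex of abelian groups is formal. Over a ring of global dimension one this is automatic --- it is a classical fact that every object of $D(\mathbb{Z})$ is isomorphic to the direct sum of its shifted homology groups. The mechanism is that two-term gluings split for homological reasons, and the only remaining, Massey-product-type obstructions to splitting the truncation tower $\{\tau_{\le n}C_*\}$ lie in groups $\mathrm{Ext}^k_{\mathbb{Z}}(H_m,H_n)$ with $k\ge 2$, which vanish because $\mathbb{Z}$ is hereditary. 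Transporting the resulting splitting $C_*\simeq\bigoplus_n H_n(C_*)[n]$ back through Dold--Kan and realizing yields $A\simeq\prod_n K(\pi_n(A),n)$, hence the same conclusion for $M$.

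I expect the main obstacle to be not the homological algebra but the two reductions bracketing it: making precise that a grouplike commutative topological monoid is modelled by a topological abelian group, and checking that realization converts the (possibly infinite) direct-sum splitting into the intended product of Eilenberg--MacLane spaces, where the distinction between the weak product and the full product must be handled (both being products of Eilenberg--MacLane spaces with the same homotopy groups, they are weakly equivalent). It is worth noting that strict commutativity is genuinely necessary and cannot be weakened to a homotopy-$H$-space structure: the argument has no analogue for a space such as $S^3$, precisely because there is no delooping to an abelian group and hence no chain-complex model to split.
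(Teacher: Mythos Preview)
The paper does not prove this proposition; it is quoted from \cite[Corollary~4K.7]{Hatcher-top-book} and attributed to Moore. Your sketch is correct and is essentially the argument found in that reference: pass to the simplicial world via the singular complex, use the Dold--Kan correspondence to convert a simplicial abelian group into a non-negatively graded chain complex over $\mathbb{Z}$, and then invoke formality of such complexes (the vanishing of $\mathrm{Ext}^{\,\ge 2}_{\mathbb{Z}}$) to split it as $\bigoplus_n H_n[n]$.

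You have also correctly located the one genuinely delicate point, namely the reduction from a path-connected commutative \emph{monoid} to an honest (simplicial) abelian \emph{group}. A clean way to execute this, rather than trying to produce a strict topological abelian group model for $M$, is to group-complete the simplicial commutative monoid $\mathrm{Sing}(M)$ levelwise via the Grothendieck construction; the resulting simplicial abelian group receives a map from $\mathrm{Sing}(M)$ which is a weak equivalence because $\pi_0(M)$ is already a group. Your closing remark distinguishing the weak and full products of Eilenberg--MacLane spaces is also to the point.
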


The spaces $\ETsp{k}(L)$ are built as homotopy limits of diagrams of
spaces. Recall that given a small category $\C$ and a (covariant)
functor $D\co \C \rightarrow \Top$  (a diagram), that $\holim_\C D$ is constructed
as follows (see,
  e.g.,~\cite[Section 11.5]{BK-top-book} or the concise
  notes~\cite[Section 3.7]{SG-top-holim}).  Consider the product
  \begin{equation}\label{eq:product}
 \prod_{\sigma\in N(\C)}\Hom(\Delta^n,D(c_n))=
    \prod_{n\geq 0}
\prod_{\substack{c_0\stackrel{\al_1}{\rightarrow} \dots
    \stackrel{\al_n}{\rightarrow} c_n\\ \alpha_i\neq\Id}}
\Hom(\Delta^n,D(c_n))
  \end{equation}
  where $N(\C)$ is the subset of the nerve of $\C$ consisting of all
  sequences of composable morphisms
  $\sigma=(c_0\stackrel{\alpha_1}{\longrightarrow}
  c_1\stackrel{\alpha_2}{\longrightarrow}\cdots\stackrel{\alpha_n}{\longrightarrow}c_n)$
  in which none of the morphisms are identity maps,
  % where $N(\C)$ is the nerve of $\C$, the
  % $\sigma=(c_0\stackrel{\alpha_1}{\longrightarrow}
  % c_1\stackrel{\alpha_2}{\longrightarrow}\cdots\stackrel{\alpha_n}{\longrightarrow}c_n)$
  % are sequences of composable morphisms in which none of the
  % morphisms are identity maps,
  and $\Hom$ denotes the space of continuous maps from the standard
  $n$-simplex. The homotopy limit $\holim_\C D$ is the subspace of
  this product consisting of those tuples $(f_\sigma)_{\sigma\in
    N(\C)}$ such that the following diagrams commute:
  \begin{equation}\label{eq:sub-1}
    \begin{split} %Hack to get equation number in the right place.
      \xymatrix{
        \Delta^{n-1}\ar[d]_{d^i}\ar[rr]^{{  f_{d_i\sigma}}} & &D(c_n)\ar[d]^{\Id}\\
        \Delta^n\ar[rr]^{{  f_{\sigma}}} & & D(c_n)
      }
    \end{split}
  \end{equation}
  for each $0<i<n$, and
  \begin{equation}\label{eq:sub-2}
    \begin{split} %Hack to get equation number in the right place.
  \xymatrix{
    \Delta^{n-1}\ar[d]_{d^0}\ar[rr]^{{  f_{d_0\sigma}}} & & D(c_n)\ar[d]^{\Id}\\
    \Delta^n\ar[rr]^{{  f_{\sigma}}} & & D(c_n)
  }
  \qquad\text{and}\qquad
  \xymatrix{
    \Delta^{n-1}\ar[d]_{d^n}\ar[rr]^{{  f_{d_n\sigma}}}& &D(c_{n-1})\ar[d]^{D(\alpha_n)}\\
    \Delta^n\ar[rr]^{{  f_{\sigma}}} & & D(c_n)
  }
  \end{split}
  \end{equation}
  corresponding to the cases $i=0$ and $i=n$, respectively.
  Here the map $d^i$ denotes the $i\th$ face inclusion, { 
$d_i\sigma=(c_0\stackrel{\alpha_1}{\longrightarrow}
 \cdots c_{i-1}\stackrel{\alpha_{i+1}\alpha_i}{\longrightarrow}
c_{i+1}\cdots\stackrel{\alpha_n}{\longrightarrow}c_n
  )$ when $0<i<n$, and $d_0,d_n$ similarly.}

The
following is well-known, but for completeness we give its (short)
proof.

\begin{proposition}\label{prop:holim}
  Let $D\co \C \rightarrow \Top$ be a diagram of topological abelian groups and
  continuous group homomorphisms. Then the homotopy limit of $D$ is a
  topological abelian group.
\end{proposition}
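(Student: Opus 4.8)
The plan is to realize $\holim_\C D$ as a subgroup of a topological abelian group assembled from the product in~\eqref{eq:product}, and then invoke the fact that a subgroup of a topological abelian group is again one in the subspace topology.

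First I would record that for any topological abelian group $A$, the mapping space $\Hom(\Delta^n, A)$ with the compact-open topology is a topological abelian group under pointwise addition, the continuity of the operations following from the continuity of the operations on $A$ together with the compactness of $\Delta^n$. Since an arbitrary product of topological abelian groups is a topological abelian group, the product $\prod_{\sigma \in N(\C)} \Hom(\Delta^n, D(c_n))$ of~\eqref{eq:product} is then a topological abelian group, with the projection to each factor a continuous homomorphism.

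Next I would check that the relations~\eqref{eq:sub-1} and~\eqref{eq:sub-2} defining $\holim_\C D$ inside this product carve out a subgroup. Precomposition with a face inclusion $d^i\colon \Delta^{n-1} \hookrightarrow \Delta^n$ induces a continuous homomorphism $\Hom(\Delta^n, D(c_n)) \to \Hom(\Delta^{n-1}, D(c_n))$, and postcomposition with the continuous homomorphism $D(\alpha_n)$ induces a continuous homomorphism $\Hom(\Delta^{n-1}, D(c_{n-1})) \to \Hom(\Delta^{n-1}, D(c_n))$. Composing these with the projections off the product, each single square in~\eqref{eq:sub-1} or~\eqref{eq:sub-2} becomes an equation $\phi = \psi$ for a pair of continuous homomorphisms $\phi, \psi$ from the product to an appropriate mapping space; its solution set is the kernel of $\phi - \psi$, hence a subgroup. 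The homotopy limit is the intersection of all of these, over all $\sigma \in N(\C)$ and all relevant $i$, and so is a subgroup of the product. Equipped with the subspace topology it is therefore a topological abelian group.

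Nothing here is deep; the step that requires the most care is the first, namely checking that pointwise addition is continuous in the compact-open topology on $\Hom(\Delta^n, A)$ — but this is standard and uses only that $\Delta^n$ is compact Hausdorff, so that one may work with the exponential correspondence. Everything after that is formal manipulation with continuous homomorphisms and their kernels.
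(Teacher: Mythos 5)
Your proposal is correct and follows essentially the same route as the paper: view the product in~(\ref{eq:product}) as a product of topological abelian groups, then observe that the commutativity constraints~(\ref{eq:sub-1}) and~(\ref{eq:sub-2}) cut out a subgroup because face-precomposition and postcomposition with the homomorphisms $D(\alpha_n)$ respect pointwise addition. Your phrasing via kernels of differences of continuous homomorphisms is a slightly slicker packaging of the paper's direct check that sums and inverses of tuples satisfying the constraints again satisfy them, but the content is identical.
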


\begin{proof}
 Pointwise addition makes the set $\Hom(\Delta^n,D(c_n))$ into an
  abelian group, and the product in formula~(\ref{eq:product}) is the product (topological
  abelian) group. It remains to see that the diagrams~(\ref{eq:sub-1})
  and~(\ref{eq:sub-2}) describe a subgroup of this product. 
Suppose that tuples $(f_{\sigma})$ and $(g_{\sigma})$
  make these diagrams commute. Then the first two diagrams
  automatically commute for the pointwise sum
 {  $(f_{\sigma}+g_{\sigma})$}. The third diagram for the
  pointwise sum becomes,
$$
\xymatrix{
    \Delta^{n-1}\ar[d]_{d^n}\ar[r] &  
    \Delta^{n-1}\times  \Delta^{n-1} \ar[d]^{d^n \times d^n}
    \ar[rr]^-{{  f_{d_n\sigma}\times g_{d_n\sigma}}} & &
D(c_{n-1})\times D(c_{n-1}) \ar[r]^-{+}\ar[d]^{D(\alpha_n)\times D(\alpha_n)} &
D(c_{n-1})\ar[d]^{D(\alpha_n)} \\
    \Delta^n \ar[r] &  
    \Delta^{n}\times  \Delta^{n} \ar[rr]^-{{  f_{\sigma}\times
      g_{\sigma}}} & & 
D(c_n) \times D(c_n) \ar[r]^-{+} &
D(c_n)
  }
$$
for which the first square obviously commutes, the second commutes
since $f$ and $g$ are in the prescribed subspace and the third commutes from the fact that $D(\alpha_n)$ is a
  group homomorphism. The inverse operation is similarly seen to be
  closed, hence the subspace defined above is a subgroup.
\qed
\end{proof}

\begin{proof}[Proof of Theorem \ref{thm:product}] 
  Let $L$ be an oriented link diagram with $c$ negative crossings.
  The space $\ETsp{k}(L)$ is constructed as follows. Let $I$ denote
  the category with objects $\{0,1\}$ and a single morphism from $0$
  to $1$, and $I^n$ the product of $I$ with itself $n$ times.  Let
  $\vect{0}$ be the initial object in $I^n$, and let $\ETQ$ be the
  result of adjoining one more object to $I^n$ and a single morphism
  from the new object to every object except $\vect{0}$. 

In \cite{ET-kh-spectrum} it is shown that there is a functor  $F\co \ETQ\to
  \AbGrp$ such that the $i\th$ derived functor of the inverse limit, ${\invlim_{\ETQ}}^i
  {F} $, is isomorphic to
  the $i\th$ unreduced Khovanov
  homology of $L$. The space $\ETsp{k}(L)$ is constructed by composing this functor with the
  Eilenberg-MacLane space functor  $K(-,k+c)$ and taking the homotopy
  limit of the resulting diagram of spaces. 

We may now use the explicit model for  Eilenberg-MacLane spaces given
by McCord. By applying Proposition \ref{prop:bifunctor} we define a diagram $D\colon \ETQ \ra \AbTop$ as the
composition
$$
\xymatrix{
\ETQ \ar[r]^F & \AbGrp \ar[rr]^{B(-,S^{k+c})} & &\AbTop .
}
$$
By the homotopy invariance property of the homotopy limit construction
we have
$$
\ETsp{k}(L) \simeq \holim_{\ETQ}{D}.
$$
By
  Proposition \ref{prop:holim}, the homotopy limit on the right is
  itself a topological abelian group, and hence, by 
  Proposition \ref{prop:TAG-is-K-pi-n}, a product of Eilenberg-MacLane
  spaces.
\qed
\end{proof}

\begin{corollary}
  The homotopy type of $\ETspace(L)$ is determined by $\Kh(L)$.
\end{corollary}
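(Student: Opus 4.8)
The plan is to read off the corollary from Theorem~\ref{thm:product}, the homotopy-group computation, and the rigidity of products of Eilenberg--MacLane spaces. First I would record the (easy) converse to Proposition~\ref{prop:TAG-is-K-pi-n}: a CW space $X$ homotopy equivalent to a weak product $\prod_{n\ge0}K(A_n,n)$ has $A_n\cong\pi_n(X)$, and the projections to the factors assemble into a weak equivalence $X\to\prod_{n\ge0}K(\pi_n(X),n)$, hence a homotopy equivalence; so the homotopy type of such an $X$ is a function of the groups $\pi_*(X)$ alone. Next, since $\ETspace(L)$ is an $\Omega$-spectrum, property~(ii) gives $\pi_n(\ETsp{k}(L))\cong\pi_{n-k}(\ETspace(L))=\Kh^{k-n}(L)$, the unreduced Khovanov homology of $L$. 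Combining this with Theorem~\ref{thm:product} and the previous sentence yields $\ETsp{k}(L)\simeq\prod_{n\ge0}K(\Kh^{k-n}(L),n)$, whose homotopy type depends only on the graded group $\Kh(L)$. This already proves the assertion at the level of the individual spaces $\ETsp{k}(L)$.

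To obtain the statement for the spectrum $\ETspace(L)$ itself I would use the extra structure produced in the proof of Theorem~\ref{thm:product}: by Proposition~\ref{prop:holim} each $\ETsp{k}(L)$ is a topological abelian group, and because McCord's bifunctor $B(-,-)$ of Proposition~\ref{prop:bifunctor} is natural in the sphere coordinate as well as in the group coordinate, the structure maps $\ETsp{k}(L)\to\Omega\ETsp{k+1}(L)$ are continuous homomorphisms. Thus $\ETspace(L)$ is a spectrum object \emph{in topological abelian groups}; passing through the Dold--Thom correspondence it lifts to an object of the derived category $D(\mathbb{Z})$ of abelian groups. Since $\mathbb{Z}$ has homological dimension one, every such object is (non-canonically) the direct sum of its homology groups placed in their own degrees, so $\ETspace(L)\simeq\bigvee_i\Sigma^{-i}H\Kh^{-i}(L)$, a finite wedge because $\Kh(L)$ is finitely generated and bounded. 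The right-hand side is visibly determined by $\Kh(L)$.

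I expect the passage from ``each $\ETsp{k}(L)$ is a product of Eilenberg--MacLane spaces'' to ``$\ETspace(L)$ is a generalized Eilenberg--MacLane spectrum'' to be the one genuinely non-formal point, since the level-wise splittings need not be compatible with the structure maps a priori. The route sketched above handles it cleanly via the topological-abelian-group structure; alternatively one can argue directly that the stable $k$-invariants of $\ETspace(L)$ vanish, by observing that after suspending enough to make the spectrum connective (permissible as $\Kh(L)$ is bounded) each stable $k$-invariant is detected by the unstable $k$-invariant of a sufficiently high $\ETsp{k}(L)$, which vanishes by Theorem~\ref{thm:product}. Everything else is bookkeeping with the grading conventions of~\cite{ET-kh-spectrum}.
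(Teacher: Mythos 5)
Your first paragraph is precisely the paper's argument: the corollary is stated without proof because it is immediate from Theorem~\ref{thm:product} together with property (ii) --- each $\ETsp{k}(L)$ is a product of Eilenberg--MacLane spaces, a (finite) product $\prod_n K(A_n,n)$ is determined up to homotopy equivalence by its homotopy groups, and those homotopy groups are the Khovanov homology groups. So at the level the paper intends (the homotopy types of the individual spaces $\ETsp{k}(L)$, which is also the level at which the subsequent corollary about $\Sym^\infty\Khsp{k}(L)$ is phrased), you are done after paragraph one. Your second and third paragraphs address a genuinely stronger statement --- that the \emph{spectrum} $\ETspace(L)$ is a generalized Eilenberg--MacLane spectrum, i.e.\ that the level-wise splittings can be chosen compatibly with the structure maps --- which the paper neither claims nor needs, and you are right that this is the one non-formal point if one reads the corollary that strictly. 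Your route through the topological-abelian-group structure (structure maps are continuous homomorphisms by naturality of $B(-,-)$ in the space variable, hence the spectrum lifts to $D(\mathbb{Z})$, which splits since $\mathbb{Z}$ is hereditary) is sound in outline, as is the alternative via vanishing of stable $k$-invariants detected in the stable range; but both import machinery (the spectrum-level Dold--Thom correspondence, or Postnikov towers of spectra) far beyond what the paper uses, and the $D(\mathbb{Z})$ argument needs the additional check that the homotopy-invariance replacement of the original diagram by the McCord diagram is compatible with the structure maps. In short: correct, with the first paragraph matching the paper and the remainder being an optional strengthening.
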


The spectrum $\KhSpace(L) =\{\Khsp{k}(L)\}$  constructed in~\cite{RS-khovanov} has the additional property that
the cellular cochain complex of the space $\Khsp{k}(L)$ is isomorphic to the Khovanov complex of $L$ (up to shift).
It follows from the description of the Khovanov homology of the mirror image (see \cite{Khovanov00}) that
$$
\wt{H}_i(\KhSpace(L))=\Kh^{-i}(-L)
$$
where $-L$ denotes the mirror of
$L$. The infinite symmetric product  $\Sym^\infty\Khsp{k}(L)$ is seen from the Dold-Thom theorem to be
$$
  \Sym^\infty\Khsp{k}(L) = \prod_n K(\wt{H}_n(\Khsp{k}(L) ),n)
$$
from which we have the following.
\begin{corollary}
  For large enough $k$, the space $\ETsp{k}(-L)$ is homotopy equivalent to the
  infinite symmetric product $\Sym^\infty\Khsp{k}(L)$.
\end{corollary}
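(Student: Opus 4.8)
The plan is to show that the two spaces are products of Eilenberg-MacLane spaces with the same homotopy groups, and then match those groups. By Theorem~\ref{thm:product}, $\ETsp{k}(-L)$ is homotopy equivalent to a product of Eilenberg-MacLane spaces; since a product $\prod_n K(G_n,n)$ has $\pi_n\cong G_n$ and hence is determined up to homotopy equivalence by its homotopy groups, this gives $\ETsp{k}(-L)\simeq\prod_n K(\pi_n(\ETsp{k}(-L)),n)$. On the other side, the Dold-Thom theorem yields $\Sym^\infty\Khsp{k}(L)\simeq\prod_n K(\wt{H}_n(\Khsp{k}(L)),n)$, as in the displayed formula above, provided $k$ is large enough that $\Khsp{k}(L)$ is connected. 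It therefore suffices to exhibit isomorphisms $\pi_n(\ETsp{k}(-L))\cong\wt{H}_n(\Khsp{k}(L))$ for every $n$.

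To obtain these I would go through the spectrum-level invariants, which are already recorded. Since $\ETspace(-L)=\{\ETsp{k}(-L)\}$ is an $\Omega$-spectrum with $\pi_i(\ETspace(-L))=\Kh^{-i}(-L)$, the structure maps give $\pi_n(\ETsp{k}(-L))\cong\Kh^{k-n}(-L)$ once $k$ is large; in particular these groups vanish for small $n$, so $\ETsp{k}(-L)$ is indeed connected. On the other hand, from the identification of the cellular cochain complex of $\Khsp{k}(L)$ with the Khovanov complex of $L$ up to shift, together with the behaviour of Khovanov homology under mirroring recorded above, one reads off $\wt{H}_n(\Khsp{k}(L))\cong\Kh^{k-n}(-L)$. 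Matching these, $\pi_n(\ETsp{k}(-L))\cong\wt{H}_n(\Khsp{k}(L))$, whence
$$
\ETsp{k}(-L)\ \simeq\ \prod_n K\bigl(\pi_n(\ETsp{k}(-L)),n\bigr)\ =\
\prod_n K\bigl(\wt{H}_n(\Khsp{k}(L)),n\bigr)\ \simeq\ \Sym^\infty\Khsp{k}(L).
$$

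I expect the only real work to be the grading bookkeeping in the second paragraph: one must align the degree shift relating $\wt{H}_*(\Khsp{k}(L))$ to $\wt{H}_*(\KhSpace(L))$ with the $\Omega$-spectrum shift relating $\pi_*(\ETsp{k}(-L))$ to $\pi_*(\ETspace(-L))$, keeping track both of the normalization by the number $c$ of negative crossings appearing in the Everitt-Turner construction and of the reduced-versus-unreduced conventions for $\Kh$. One should also pin down how large $k$ must be, namely large enough that $\Khsp{k}(L)$ (equivalently $\ETsp{k}(-L)$) is connected, so that the Dold-Thom theorem applies in the form used. Granting these normalizations, the corollary is a direct assembly of Theorem~\ref{thm:product}, the Dold-Thom theorem, and the computations of the two families of invariants quoted above.
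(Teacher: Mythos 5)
Your proposal is correct and follows essentially the same route the paper intends: both $\ETsp{k}(-L)$ and $\Sym^\infty\Khsp{k}(L)$ are products of Eilenberg-MacLane spaces (by Theorem~\ref{thm:product} and Dold--Thom respectively), and their homotopy groups agree via $\pi_n(\ETsp{k}(-L))\cong\Kh^{k-n}(-L)\cong\wt{H}_n(\Khsp{k}(L))$, using the mirror identity $\wt{H}_i(\KhSpace(L))=\Kh^{-i}(-L)$. The grading bookkeeping you flag is exactly the content the paper leaves implicit, and your identification of where ``large enough $k$'' is needed is accurate.
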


We end by noting that the analogue or Theorem \ref{thm:product} for the spectra $\KhSpace(L)$ is not true. For all alternating knots $\KhSpace(L)$ is a wedge of Moore spaces~\cite{RS-khovanov}, however there are examples of non-alternating knots for which $\KhSpace(L)$ is not a wedge of Moore spaces (see \cite{LS2}).

\subsection*{Acknowledgements} We
 thank Tyler Lawson for several helpful
suggestions, including communicating Proposition~\ref{prop:holim} to us.

\bibliographystyle{amsalpha}
\bibliography{ELST}
\end{document}